\documentclass[11pt,letterpaper]{article}

\usepackage{amsmath,amssymb,amsthm}
\usepackage{array}
\usepackage[top=1in,bottom=1in,left=1in,right=1in]{geometry}
\usepackage{booktabs}
\usepackage{pgfplots}
\pgfplotsset{compat=1.17}
\usepackage{hyperref}
\hypersetup{colorlinks=true,linkcolor=blue,citecolor=blue,urlcolor=blue}

\newtheorem{theorem}{Theorem}[section]
\newtheorem{lemma}[theorem]{Lemma}
\newtheorem{corollary}[theorem]{Corollary}

\theoremstyle{definition}
\newtheorem{definition}[theorem]{Definition}
\newtheorem{example}[theorem]{Example}
\theoremstyle{remark}
\newtheorem{remark}[theorem]{Remark}

\newcommand{\C}{\mathbb{C}}
\newcommand{\R}{\mathbb{R}}

\title{\textbf{On the Roots of Connected Domination Polynomials}}

\author{Pingping Shao \and Chengye Zhao\thanks{College of Sciences, China Jiliang
University, Hangzhou 310018, Zhejiang, China. E-mail: s24080701012@cjlu.edu.cn (P.~Shao); cyzhao@cjlu.edu.cn (C.~Zhao, corresponding author).}}

\date{}

\begin{document}
\maketitle

\begin{abstract}
\noindent We determine the closure of the connected domination roots.
The main tool is a substitution formula for
the lexicographic product with a complete graph,
$D_c(G[K_n],x)=D_c(G,(x+1)^n-1)$, proved in Theorem~\ref{thm:cd}. Combined
with two explicit families of seed roots---the real roots of the cycles $C_n$
and the real roots of the joins $C_m\vee C_n$ lying in $(-1,0)$---this formula
gives the two main results: the closure of the real connected domination roots
is $(-\infty,0]$, and the closure of all connected domination roots is the
whole complex plane. These
are the connected domination analogues of the root-density theorems of
Brown--Tufts and Brown--Beaton for the ordinary domination polynomial.

\bigskip
\noindent\textbf{Keywords:} connected domination polynomial; connected
dominating set; domination root; graph substitution; lexicographic product;
root density.

\medskip
\noindent\textbf{MSC (2020):} 05C69, 05C31.
\end{abstract}

\section{Introduction}

\subsection{Basic concepts}

All graphs $G=(V,E)$ considered in this paper are finite, undirected and simple.
For a vertex $v\in V$, its open neighbourhood is $N(v)=\{u:uv\in E\}$ and its
closed neighbourhood is $N[v]=N(v)\cup\{v\}$.

\begin{definition}[Dominating set and domination number]
A set $S\subseteq V$ is a \emph{dominating set} of $G$ if every vertex
$v\in V\setminus S$ is adjacent to at least one vertex of $S$, that is,
$N[S]=V$. The minimum cardinality of a dominating set is the \emph{domination
number}, denoted by $\gamma(G)$.
\end{definition}

\begin{definition}[Connected dominating set]
Let $G$ be a connected graph. A dominating set $S$ of $G$ is a \emph{connected
dominating set} if the induced subgraph $G[S]$ is connected; the minimum
cardinality of a connected dominating set is the \emph{connected domination
number}, denoted by $\gamma_c(G)$.
\end{definition}

\begin{definition}[Domination polynomial]
The \emph{domination polynomial} of a graph $G$ is
\begin{equation}
D(G,x)=\sum_{i=\gamma(G)}^{|V|} d(G,i)\,x^i,
\end{equation}
where $d(G,i)$ is the number of dominating sets of $G$ of cardinality $i$. For a
connected graph $G$ the \emph{connected domination polynomial} is defined
analogously by $D_c(G,x)=\sum_k d_c(G,k)x^k$, where $d_c(G,k)$ is the number of
connected dominating sets of $G$ of cardinality $k$. The roots of these two
polynomials are called \emph{domination roots} and \emph{connected domination
roots}, respectively.
\end{definition}

\begin{definition}[Graph substitution / lexicographic product]
Let $G$ and $H$ be graphs. The \emph{lexicographic product} (also called graph
substitution) $G[H]$ is obtained by replacing every vertex $v$ of $G$ by a copy
$H^v$ of $H$, and joining every vertex of $H^u$ to every vertex of $H^v$ if and
only if $uv\in E(G)$. Its vertex set is $V(G)\times V(H)$, and
$(u,i)(v,j)\in E(G[H])$ if and only if $uv\in E(G)$, or $u=v$ and
$ij\in E(H)$. Throughout this paper we take $H=K_n$, the complete graph of order
$n$, so that each copy $K_n^v$ is a clique of order $n$.
\end{definition}

\subsection{A brief survey of domination polynomials}

Domination theory is a classical branch of graph theory; the monograph of
Haynes, Hedetniemi and Slater~\cite{hhs} gives a systematic account of the state
of the art at the end of the twentieth century. The counting generating function
of dominating sets, the domination polynomial, was introduced by Arocha and
Llano in 2000~\cite{al}, and its basic theory was developed by
Alikhani and Peng~\cite{ap}. Akbari, Alikhani and Peng proved that several
families of graphs are uniquely determined by their domination
polynomials~\cite{aap}, and Kotek et al.\ established recurrence relations and
cut-vertex splitting formulas for the domination polynomial~\cite{kotek}.

The location of roots is a central theme in the theory of graph
polynomials. Sokal proved in 2004 that the chromatic
roots are dense in the whole complex plane~\cite{sokal}; Brown, Hickman and
Nowakowski proved the analogous statement for the roots of independence
polynomials~\cite{bhn}. The foundational work on domination roots is the 2014
paper of Brown and Tufts~\cite{bt}; its main results are: (i) the substitution lemma
(Lemma~\ref{lem:bt} below); (ii) the construction of explicit families of graphs
having domination roots in the right half-plane; (iii) the determination of the
limit curve of the domination roots of the complete bipartite graphs $K_{n,n}$;
and (iv) the proof that the closure of the domination roots is the whole complex
plane. Alikhani~\cite{alikhani} also studied the lexicographic product in this
framework and constructed families of graphs whose domination roots are dense in
the complex plane; he obtained the substitution formula for $K_n$
independently~\cite{isrn2013}, in a paper that appeared slightly later
than~\cite{bt}. Subsequently, Oboudi proved that every domination root satisfies
$|z+1|\le\sqrt[\delta+1]{2^n-1}$, and studied graphs whose domination
roots are all real or all integral~\cite{oboudi}; Brown and Beaton proved
that the closure of the real domination roots is exactly $(-\infty,0]$~\cite{bb};
Beaton and Brown investigated the unimodality of domination
polynomials~\cite{bb2}; and Akbari and Oboudi showed that cycles are determined
by their domination polynomials~\cite{ao}.

The connected domination number was introduced by Sampathkumar and Walikar in
1979~\cite{sw}. The corresponding counting polynomial appeared only recently: the
connected domination polynomial was introduced by Dhananjaya Murthy et
al.~\cite{murthy}, who computed it for paths, cycles and joins of graphs;
Mojdeh and Emadi studied the connected domination polynomial
of general graphs and used it to classify several graph
families~\cite{mojdeh}; and Yoosuf and Kuttipulackal computed it for several
graph constructions~\cite{yoosuf}. More recently, Caputi, Celoria and Collari
studied connected domination from a homological viewpoint, proving that the Euler
characteristic of a certain graph homology equals the connected domination
polynomial evaluated at a particular point~\cite{ccc}. (For the related total and
independent domination polynomials we refer to~\cite{cdh,allan,jafari} and the
references therein.) None of this work, however, addresses the location of the
connected domination roots; that is the question studied here.

\subsection{The substitution formula of Brown and Tufts}

The following substitution formula of Brown and Tufts underlies all of our main
results and will be used throughout.

\begin{lemma}[Brown and Tufts, {\cite{bt}}]\label{lem:bt}
Let $G$ be an arbitrary graph and let $K_n$ be the complete graph of order $n$.
Then
\begin{equation}
D(G[K_n],x)=D\big(G,(x+1)^n-1\big).
\end{equation}
\end{lemma}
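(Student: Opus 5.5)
The plan is to prove the identity by grouping the subsets of $V(G[K_n])=V(G)\times V(K_n)$ according to their \emph{shadow} in $G$. For $T\subseteq V(G\times K_n)$ define the shadow
\[
\pi(T)=\{v\in V(G): T\cap V(K_n^v)\neq\emptyset\}.
\]
The key claim is that $T$ is a dominating set of $G[K_n]$ if and only if $\pi(T)$ is a dominating set of $G$.

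For the forward direction, suppose $T$ dominates $G[K_n]$ and take $v\notin\pi(T)$. Pick any vertex $(v,i)$. It is not in $T$, so it has a neighbour $(u,j)\in T$. Since $T$ misses the clique $K_n^v$, we have $u\neq v$, so $uv\in E(G)$ and $u\in\pi(T)$. Hence $\pi(T)$ dominates $G$.

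For the converse, suppose $\pi(T)$ dominates $G$ and take $(v,i)\notin T$. There are two cases.
\begin{itemize}
\item If $v\in\pi(T)$, some $(v,j)\in T$ with $j\neq i$ lies in the same clique $K_n^v$, so it is adjacent to $(v,i)$.
\item Otherwise $v$ has a neighbour $u\in\pi(T)$, and every vertex of $K_n^u$ is adjacent to $(v,i)$.
\end{itemize}
Here we use that $H=K_n$ is complete, so that the vertices of each $K_n^v$ are pairwise adjacent.

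Next comes the counting. For a fixed dominating set $S$ of $G$, the sets $T$ with $\pi(T)=S$ are obtained by choosing a nonempty subset $T_v\subseteq V(K_n^v)$ independently for each $v\in S$, and taking nothing elsewhere. Weighting each $T$ by $x^{|T|}$, the contribution of each $v\in S$ is
\[
\sum_{\emptyset\neq A\subseteq[n]}x^{|A|}=(1+x)^n-1.
\]
Summing over all dominating sets $S$ of $G$ therefore gives
\[
D(G[K_n],x)=\sum_{S}\big((x+1)^n-1\big)^{|S|}=D\big(G,(x+1)^n-1\big).
\]

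No step is really an obstacle. The only point needing care is the converse direction of the shadow equivalence when $v\in\pi(T)$ but $(v,i)\notin T$: there one must use the clique edges inside $K_n^v$, which is exactly where completeness of $K_n$ enters. The same argument should carry over to Theorem~\ref{thm:cd}, with the additional check that $G[K_n][T]$ is connected if and only if $G[\pi(T)]$ is connected (for $|\pi(T)|\ge1$).
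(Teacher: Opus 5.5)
Your proof is correct and follows the same route as the paper. The paper cites this lemma from Brown and Tufts without proving it, but the domination-equivalence step and the fibre-wise counting in its proof of Theorem~\ref{thm:cd} are your shadow equivalence and your factor $(x+1)^n-1$ per vertex of the shadow; the only slip is notational ($V(G\times K_n)$ should read $V(G[K_n])$).
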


This formula provides a mechanism for spreading roots: if $z_1$ is a
domination root, then all $n$ solutions of the equation $(z+1)^n-1=z_1$ are again
domination roots (of the graph $G[K_n]$). Brown and Tufts used this mechanism
to prove that domination roots are dense in the complex
plane~\cite{bt}, and it is also the key tool in the proof by Brown and Beaton
that the closure of the real domination roots is $(-\infty,0]$~\cite{bb}. To the
best of our knowledge, no analogous substitution formula has been recorded for
the connected domination polynomial.

\subsection{Contribution and organization}

If the connected domination polynomial obeyed a similar substitution formula,
the same mechanism would become available for connected domination roots.
Section~\ref{sec:sub} proves such a formula (Theorem~\ref{thm:cd}); the remaining
sections use it to determine the distribution of the roots. Section~\ref{sec:real}
proves that the closure of the real connected domination roots is $(-\infty,0]$
(Theorem~\ref{thm:real}), taking the real roots of the cycles $C_n$ as seeds for
the part $(-\infty,-1)$ and a new construction (the real roots of the joins
$C_m\vee C_n$ lying in $(-1,0)$) as seeds for the remaining interval; the
join construction can already be seen numerically: $C_5\vee C_5$ has a root
$\approx-0.587$ in $(-1,0)$, and the roots of $C_m\vee C_n$ approach $-1$ from
the right as $m,n$ grow (Example~\ref{ex:joinseed} and
Figure~\ref{fig:seeds}).
Section~\ref{sec:complex} proves that the closure of the connected domination
roots is the whole complex plane (Theorem~\ref{thm:complex}). The substitution
behaviour of the independent and total domination polynomials is compared in
Table~\ref{tab:sub}; neither of them provides a spreading mechanism, so the
density method developed here is specific to connected domination. Concluding
remarks and open problems appear in Section~\ref{sec:concl}.

We close this section by isolating what is genuinely new here. Although the
overall strategy, a substitution formula combined with explicit seed
families, parallels the framework of~\cite{bt,bb}, two ingredients of the
present paper have no counterpart in the ordinary domination setting. First,
the proof of Theorem~\ref{thm:cd} rests on a connectivity-preserving
correspondence between the connected dominating sets of $G[K_n]$ and those of
$G$: for the ordinary domination polynomial only the domination equivalence is
needed, whereas here one must additionally show that connectedness is preserved
under projection and under lifting (Section~\ref{sec:sub}), an analysis that is
specific to connected domination. Second, to the best of our knowledge no
family of connected domination roots approaching $-1$ from the right was
previously available---the cycle roots of Lemma~\ref{lem:cycle} approach $-1$
only from the left---so the interval $(-1,0)$ could not be reached by known
seeds; the construction of right-hand seeds via the joins $C_m\vee C_n$
(Lemma~\ref{lem:joinseed}) is new and is indispensable for the second half of
Theorem~\ref{thm:real}. Both ingredients are used in what follows, and neither
has an analogue in the ordinary domination theory.

\section{Substitution formula for the connected domination polynomial}\label{sec:sub}

In this section we establish the main tool of the paper: the connected domination
polynomial satisfies the same substitution formula as the ordinary domination
polynomial (Lemma~\ref{lem:bt}). We keep the notation
$\pi(S)=\{v:S\cap V(K_n^v)\neq\emptyset\}$ and repeatedly use the following two
structural facts about the substitution $G[K_n]$.

\paragraph{Fact A (domination and connectivity inside a clique).} Any vertex of
the clique $K_n^v$ dominates the whole clique; every nonempty subset of a clique
induces a connected subgraph.

\paragraph{Fact B (complete bipartite connection between cliques).} If
$u\sim v$, then every vertex of $K_n^u$ is adjacent to every vertex of $K_n^v$.

\begin{theorem}\label{thm:cd}
Let $G$ be a connected graph. Then
\begin{equation}
D_c(G[K_n],x)=D_c\big(G,(x+1)^n-1\big).
\end{equation}
\end{theorem}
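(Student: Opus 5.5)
The plan is to sort the connected dominating sets of $G[K_n]$ according to their projection $\pi(S)\subseteq V(G)$. The whole proof then rests on one equivalence.

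\emph{Key claim.} For every nonempty $S\subseteq V(G[K_n])$, $S$ is a connected dominating set of $G[K_n]$ if and only if $\pi(S)$ is a connected dominating set of $G$.

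Granting this, I fix $T\subseteq V(G)$ and sum $x^{|S|}$ over all $S$ with $\pi(S)=T$. Such an $S$ is obtained by choosing, independently for each $v\in T$, a nonempty subset $S_v\subseteq V(K_n^v)$. Each factor contributes $\sum_{\emptyset\neq A\subseteq[n]}x^{|A|}=(x+1)^n-1$. Hence
\[
D_c(G[K_n],x)=\sum_{T}\big((x+1)^n-1\big)^{|T|}=D_c\big(G,(x+1)^n-1\big),
\]
where the middle sum runs over the connected dominating sets $T$ of $G$.

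\emph{Domination part of the claim.} Let $T=\pi(S)$ and take a vertex $(w,j)\notin S$.
\begin{itemize}
\item If $w\in T$, then some $(w,i)\in S$ with $i\neq j$ exists, and it dominates $(w,j)$ by Fact A.
\item If $w\notin T$ but $w\sim t$ for some $t\in T$, then $(w,j)$ is dominated by any vertex of $S$ in $K_n^t$, by Fact B.
\item If $w\notin N[T]$, then no vertex of $S$ lies in $K_n^w$ or in $K_n^u$ for any $u\sim w$, so $(w,j)$ is not dominated.
\end{itemize}
So $S$ dominates $G[K_n]$ iff $N[T]=V(G)$.

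\emph{Connectivity part of the claim.}
\begin{itemize}
\item \emph{Projection.} Consecutive vertices $(u,i),(v,j)$ on a path in $G[K_n][S]$ satisfy either $u=v$ or $uv\in E(G)$. So after deleting repetitions, the path projects to a walk in $G[T]$, and $G[T]$ is connected whenever $G[K_n][S]$ is.
\item \emph{Lifting.} Suppose $G[T]$ is connected and take $(u,i),(v,j)\in S$. If $u=v$ they are adjacent by Fact A. Otherwise pick a path $u=t_0,t_1,\dots,t_k=v$ in $G[T]$ and choose any vertex of $S$ in each $K_n^{t_r}$. Consecutive choices are adjacent by Fact B, which yields a path in $G[K_n][S]$ from $(u,i)$ to $(v,j)$.
\item \emph{Degenerate case.} If $|T|=1$, then $S$ lies inside a single clique and is trivially connected.
\end{itemize}

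I expect the main obstacle to be this connectivity equivalence, in particular the projection direction, since connectedness is not a priori inherited by images of subsets. The point to verify carefully is that an edge of $G[K_n][S]$ between distinct cliques always projects to an edge of $G[T]$. This holds because $H=K_n$ has no edges between distinct copies beyond those forced by $E(G)$.
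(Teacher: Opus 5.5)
Your proposal is correct and takes essentially the same approach as the paper: it proves the same projection equivalence ($S$ is a connected dominating set of $G[K_n]$ iff $\pi(S)$ is one of $G$, with domination and connectivity checked in both directions) and uses the same per-fibre count giving $\big((x+1)^n-1\big)^{|T|}$. The differences are cosmetic: you treat domination by a trichotomy on $(w,j)$, and you lift connectivity along a path in $G[T]$ instead of merging blocks along a spanning tree. Note also that cross-fibre edges project to edges of $G$ for any $H$, by the definition of $G[H]$; completeness of $K_n$ is needed only for Fact~A.
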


\begin{proof}
Let $S\subseteq V(G[K_n])$, write $T=\pi(S)\subseteq V(G)$, and put
$|V(G)|=m$, so that $|V(G[K_n])|=mn$, where $\pi$ is the projection defined
above. We claim that
\[
S\text{ is a connected dominating set of }G[K_n]
\iff T=\pi(S)\text{ is a connected dominating set of }G.
\]

\emph{Equivalence of domination.}
\begin{itemize}
\item[($\Rightarrow$)] Suppose that $S$ dominates $G[K_n]$, and take any
$v\in V(G)$. If $v\in T$, then $K_n^v$ is dominated by $S\cap K_n^v$; in
particular $v$ itself is dominated by $T$ in $G$ (since $v\in N[T]$). If
$v\notin T$, then $K_n^v$ contains no vertex of $S$; any vertex $(v,j)$ of
$K_n^v$ is dominated by some $s=(u,i)\in S$, and from
$(u,i)(v,j)\in E(G[K_n])$ with $u\neq v$ (otherwise $v\in T$) we obtain
$u\sim v$. Hence $u\in T$ and $u\sim v$, that is, $v$ is dominated by $T$ in
$G$.
\item[($\Leftarrow$)] Suppose that $T$ dominates $G$, and take any
$(v,j)\in V(G[K_n])$. If $v\in T$, then $K_n^v\cap S\neq\emptyset$, and
$(v,j)$ has a neighbour of $S$ inside $K_n^v$; if $v\notin T$, there exists
$u\in T$ with $u\sim v$, and by Fact~B every vertex of $K_n^u$ is adjacent to
every vertex of $K_n^v$, so $(v,j)$ is dominated by $S\cap K_n^u$.
\end{itemize}

\emph{Equivalence of connectivity.}
\begin{itemize}
\item[($\Rightarrow$)] Suppose that $G[S]$ is connected; we show that $G[T]$
is connected. By the definition of the edge set of $G[K_n]$,
$(u,i)\sim(v,j)$ implies $u=v$ or $u\sim v$; hence $\pi$ maps every edge of
$G[S]$ to an edge of $G$ (or collapses it to a single vertex). Consequently
$\pi$ maps every path of $G[S]$ to a walk in $G[T]$: if
$s_0s_1\cdots s_\ell$ is a path in $G[S]$, then
$\pi(s_0)\pi(s_1)\cdots\pi(s_\ell)$ is a walk in $G[T]$ in which every two
consecutive terms are adjacent or equal, and after deleting repetitions one
obtains a path in $G[T]$ joining $\pi(s_0)$ to $\pi(s_\ell)$. Since $G[S]$
is connected, any two vertices $u,v\in T$ are joined by the projection of a
path of $G[S]$, so $G[T]$ is connected.

\item[($\Leftarrow$)] Suppose that $G[T]$ is connected, and write
$S=\bigcup_{v\in T}A_v$, where $A_v:=S\cap V(K_n^v)\neq\emptyset$ for
$v\in T$. We prove that $G[S]$ is connected, in two observations:
  \begin{enumerate}
  \item[(i)] Each $A_v$ is connected in $G[S]$: by Fact~A, $K_n^v$ is a
  clique (hence connected), so the subgraph induced by
  $A_v\subseteq V(K_n^v)$ is connected.
  \item[(ii)] If $uv\in E(G[T])$ (that is, $u,v\in T$ and $u\sim v$), then
  $A_u\cup A_v$ is connected in $G[S]$: by Fact~B every vertex of $K_n^u$ is
  adjacent to every vertex of $K_n^v$, so every vertex of $A_u$ is
  adjacent to every vertex of $A_v$; thus $G[A_u\cup A_v]$ contains a
  complete bipartite graph between $A_u$ and $A_v$ (together with the clique
  edges inside $K_n^u$ and $K_n^v$) and is therefore connected.
  \end{enumerate}
Take a spanning tree $T'$ of $G[T]$ (which exists since $G[T]$ is
connected). By~(ii), the union of the blocks $A_u,A_v$ along every edge of
$T'$ is connected, and by~(i) each block $A_v$ is internally connected.
Starting from the block containing the root of $T'$ and merging the blocks
one by one along the tree edges, the union remains connected after each
merging step; hence $G[S]$ is connected.
\end{itemize}

Let $T$ be a fixed connected dominating set of $G$ of size $k$ ($k\ge1$, a
connected dominating set being nonempty). By the claim above, $\pi(S)=T$ if and
only if
\[
S\cap K_n^v\neq\emptyset\;\;(v\in T),\qquad
S\cap K_n^u=\emptyset\;\;(u\notin T).
\]
Inside the clique $K_n^v$ with $v\in T$ an arbitrary nonempty subset may be
chosen, contributing the weight $\sum_{j=1}^{n}\binom{n}{j}x^j$ (the term $x^j$
recording a subset of size $j$); inside the cliques with $u\notin T$ the empty
set is chosen, contributing $1$. Hence, for fixed $T$, the total weight of all
$S$ with $\pi(S)=T$ (which are connected dominating sets of $G[K_n]$) is
\[
\left(\sum_{j=1}^{n}\binom{n}{j}x^j\right)^k\cdot 1^{m-k}
=\big((x+1)^n-1\big)^k.
\]

Finally, every $S\subseteq V(G[K_n])$ determines a unique $T=\pi(S)$. Moreover,
$S$ is a connected dominating set of $G[K_n]$ if and only if $T$ is a connected
dominating set of $G$; and for a fixed connected dominating set $T$ of $G$ the
total weight of all $S$ with $\pi(S)=T$ is $\big((x+1)^n-1\big)^{|T|}$.
Therefore
\[
\begin{aligned}
D_c(G[K_n],x)&=\sum_{T}\big((x+1)^n-1\big)^{|T|}\\
&=D_c\big(G,(x+1)^n-1\big),
\end{aligned}
\]
where the sum runs over all connected dominating sets $T$ of $G$, and the last
step is the same polynomial composition as in Lemma~\ref{lem:bt}:
$D_c(G,y)=\sum_k d_c(G,k)y^k$, and substituting $y=(x+1)^n-1$ gives the
claim.\qedhere
\end{proof}

\begin{example}[Numerical verification]\label{ex:cd}
Take $G=P_3$ (the path $v_1\!-\!v_2\!-\!v_3$) and $n=2$. The connected
dominating sets of $P_3$ are: $\{v_2\}$ (size $1$); $\{v_1,v_2\}$,
$\{v_2,v_3\}$ (note that $v_1$ and $v_3$ are not adjacent, so $\{v_1,v_3\}$
induces a disconnected subgraph of $P_3$ and is excluded); and
$\{v_1,v_2,v_3\}$ (size $3$). Hence $D_c(P_3,x)=x+2x^2+x^3$.

By Theorem~\ref{thm:cd},
\[
D_c(P_3[K_2],x)=D_c(P_3,(x+1)^2-1)=y+2y^2+y^3,\quad
y=(x+1)^2-1=x^2+2x.
\]
Expanding: $y=x^2+2x$, $y^2=x^4+4x^3+4x^2$, $y^3=x^6+6x^5+12x^4+8x^3$, so
\[
D_c(P_3[K_2],x)=2x+9x^2+16x^3+14x^4+6x^5+x^6.
\]
Enumerating all $2^6=64$ subsets of $P_3[K_2]$ (on $6$ vertices) and checking
each for connected domination confirms the formula:
$d_c(P_3[K_2],1)=2$ (the two vertices of the middle clique),
$d_c(P_3[K_2],2)=9$, $d_c(P_3[K_2],3)=16$, $d_c(P_3[K_2],4)=14$,
$d_c(P_3[K_2],5)=6$ and $d_c(P_3[K_2],6)=1$.
\end{example}

\begin{remark}\label{rem:whyKn}
The restriction to complete factors $H=K_n$ in Theorem~\ref{thm:cd} is
essential, and it explains why Table~\ref{tab:sub} below concerns only the
substitution $G[K_n]$. The completeness of the copies is used in two places in
the proof: a vertex of $K_n^v\setminus S$ is dominated by $S\cap K_n^v$ only
because $K_n^v$ is a clique (Fact~A), and the union $\bigcup_{v\in T}A_v$ is
connected only because every nonempty $A_v$ induces a connected subgraph
(Fact~A again). Both properties fail for non-complete $H$: for instance, if
$G=K_2$ and $H=\overline{K_2}$, then $G[H]=K_{2,2}$, and the set $S$ formed by
the two vertices of a single copy $H^u$ is a dominating set of $K_{2,2}$ that
induces a disconnected subgraph, although $\pi(S)$ (a single vertex) is a
connected dominating set of $K_2$; the correspondence of Theorem~\ref{thm:cd}
thus breaks down. No substitution formula for $D_c(G[H],x)$ with general $H$
is known.
\end{remark}

For the other two classical variants, independent and total domination, the
substitution question has already been settled in the
literature; for ease of comparison we restate the corresponding results below and
make no claim to their novelty.

\paragraph{Independent domination.} Jahari and Alikhani~\cite{jahari} gave the
closed form
\begin{equation}
D_i(G[H],x)=D_i\big(G,D_i(H,x)\big)
\end{equation}
for the independent domination polynomial of an arbitrary lexicographic product;
its specialisation to $H=K_n$ (where $D_i(K_n,x)=nx$) reads
\begin{equation}
D_i(G[K_n],x)=D_i(G,nx).
\end{equation}
Here an independent set takes at most one vertex in each
clique, so the freedom inside a clique degenerates from ``an arbitrary nonempty
subset'' to ``a single vertex''. Unlike Theorem~\ref{thm:cd}, the substitution
here is a linear rescaling of the variable, and therefore provides no
spreading mechanism of the type of Lemma~\ref{lem:bt}.

\paragraph{Total domination.} Dod~\cite{dod} introduced a trivariate total
domination polynomial and used it to determine the total domination polynomial of
the lexicographic product $G[K_n]$. For total domination the Brown--Tufts form
fails: two vertices inside a clique can ``support each other'', so that an entire
total dominating set collapses inside a single clique; after projection the
vertex of $G$ corresponding to that clique becomes isolated in $\pi(S)$, and the
bijection between $S$ and $\pi(S)$ breaks down. A substitution formula for total
domination must therefore keep track of the number of isolated vertices of the
projected set, a complication that does not arise for ordinary or connected
domination.

The three variants are compared in Table~\ref{tab:sub}. Only connected domination
retains the Brown--Tufts form of Theorem~\ref{thm:cd}; the density theorems of
Sections~\ref{sec:real} and~\ref{sec:complex} rely on this
mechanism. For the independent and total domination polynomials no
spreading mechanism is available, and the distribution of their roots remains
unknown (see Section~\ref{sec:concl}).

The structure of connected and total dominating sets in lexicographic
products has also been investigated from the viewpoint of forcing subsets by
Armada, Canoy and Go~\cite{armada}; their results concern the domination
parameters rather than the counting polynomials studied here.

\begin{table}[htbp]
\centering
\caption{Substitution behaviour of the classical domination variants under $G[K_n]$}
\label{tab:sub}
\begin{tabular}{@{}>{\raggedright\arraybackslash}p{2.0cm}>{\raggedright\arraybackslash}p{5.0cm}>{\raggedright\arraybackslash}p{1.8cm}>{\raggedright\arraybackslash}p{1.6cm}>{\raggedright\arraybackslash}p{3.2cm}@{}}
\toprule
Parameter & Formula for $G[K_n]$ & Type & Spread & Roots \\
\midrule
$D$ & $D(G[K_n],x)=D(G,(x+1)^n-1)$~\cite{bt} & power & yes & closure
$(-\infty,0]$; dense in $\C$~\cite{bt,bb} \\
$D_c$ & $D_c(G[K_n],x)=D_c(G,(x+1)^n-1)$ & power & yes & Theorem~\ref{thm:real},
\ref{thm:complex} (this paper) \\
$D_i$ & $D_i(G[K_n],x)=D_i(G,nx)$~\cite{jahari} & scaling & no & unknown \\
$D_t$ & trivariate formula of~\cite{dod} & complex & no & unknown \\
\bottomrule
\end{tabular}
\end{table}

\section{The closure of real connected domination roots is $(-\infty,0]$}\label{sec:real}

In this section we prove that the closure of the real connected domination roots
is $(-\infty,0]$. The proof rests on two families of seed roots: the real roots
of the connected domination polynomials of the cycles $C_n$, and the real roots
of the joins $C_m\vee C_n$ (see Definition~\ref{def:join}) lying in $(-1,0)$.
The former supply seeds for the part $(-\infty,-1)$ and the latter for the part
$(-1,0)$; the backward closure under graph substitution then spreads them over
the whole interval.

\subsection{The connected domination polynomial of cycles}

The closed form in the following lemma was obtained by Dhananjaya Murthy et
al.~\cite[Theorem~2.4]{murthy}. We include the short proof for completeness
and then derive the asymptotic behaviour of the roots needed later.

\begin{lemma}\label{lem:cycle}
For $n\ge4$,
\begin{equation}
D_c(C_n,x)=x^{n-2}\big(x^2+nx+n\big),
\end{equation}
whose real roots are $0$ (with multiplicity $n-2$) together with
\begin{equation}
z_n^{\pm}=\frac{-n\pm\sqrt{n^2-4n}}{2}\qquad
(\,n\ge5:\ z_n^+\in(-2,-1),\ z_n^-<-2;\quad n=4:\ z_4^+=z_4^-=-2\,).
\end{equation}
As $n\to\infty$, $z_n^+\to-1^-$ and $z_n^-=-n+1+o(1)\to-\infty$.
\end{lemma}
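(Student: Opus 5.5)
The plan is to count connected dominating sets of $C_n$ directly by the structure of connected induced subgraphs of a cycle. For $n\ge4$, a nonempty proper subset $S\subsetneq V(C_n)$ induces a connected subgraph exactly when it is a set of consecutive vertices (an arc). So the connected dominating sets are the whole vertex set together with the arcs that dominate.

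First I show that an arc of $k$ vertices, with $k<n$, dominates $C_n$ if and only if its complement, an arc of $n-k$ vertices, has every vertex adjacent to an endpoint of $S$. The two endpoints of $S$ dominate only the two complement vertices next to them. Hence domination holds iff $n-k\le 2$.

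The count then goes as follows.
\begin{itemize}
\item For $k=n$ there is $1$ set.
\item For $k=n-1$ there are $n$ arcs, one for each omitted vertex.
\item For $k=n-2$ there are $n$ arcs, one for each omitted pair of adjacent vertices. Here $n\ge4$ guarantees these arcs are distinct and that a removed non-adjacent pair gives a disconnected set.
\end{itemize}
No smaller $k$ works. This gives
\[
D_c(C_n,x)=x^n+nx^{n-1}+nx^{n-2}=x^{n-2}(x^2+nx+n).
\]

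The root statements follow from the quadratic formula. The discriminant $n^2-4n$ is nonnegative for $n\ge4$, and equals $0$ at $n=4$, giving the double root $-2$.

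For $n\ge5$, set $q(x)=x^2+nx+n$. Then $q(-1)=1>0$ and $q(-2)=4-n<0$. So one root lies in $(-2,-1)$; it must be $z_n^+$, and $z_n^-<-2$. Since $q(x)>0$ for $x\ge0$, there are no other real roots besides $0$.

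For the asymptotics I rationalise:
\[
z_n^+=\frac{-2n}{n+\sqrt{n^2-4n}}=\frac{-2}{1+\sqrt{1-4/n}}.
\]
This tends to $-1$ from below. Then $z_n^-=-n-z_n^+=-n+1+o(1)$ by Vieta's formula $z_n^++z_n^-=-n$.

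The only delicate step is the characterisation of connected subsets as arcs and the distinctness of the arcs for small $n$. That is exactly where the hypothesis $n\ge4$ is needed, since for $n=3$ the count differs.
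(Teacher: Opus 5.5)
Your proposal is correct and follows essentially the paper's route: connected proper subsets are arcs, domination forces the complementary arc to have at most two vertices, and the roots then come from the quadratic formula and a rationalisation. Your sign check $q(-1)=1>0>4-n=q(-2)$ even supplies the localisation $z_n^-<-2<z_n^+<-1$, which the paper only asserts. One correction to your closing remark: the count does not differ at $n=3$. There $C_3=K_3$ gives $D_c(C_3,x)=3x+3x^2+x^3=x(x^2+3x+3)$, so the hypothesis $n\ge4$ is needed only to make the discriminant $n^2-4n$ nonnegative, not for the counting.
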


\begin{proof}
Let $S$ be a connected dominating set of the cycle $C_n$. The connected proper
subsets of a cycle are its arcs, so the complement $\overline{S}$ is also
an arc. Now $S$ dominates $C_n$ if and only if every vertex of
$\overline{S}$ has a neighbour in $S$, which is equivalent to the arc
$\overline{S}$ having at most $2$ vertices (if it has at least $3$ vertices,
the middle vertex of the arc has both neighbours in $\overline{S}$). Hence
$S$ is either an arc of $n-2$ vertices ($n$ positions), an arc of $n-1$
vertices ($n$ positions), or the whole vertex set ($1$ possibility), so that
\begin{equation}
D_c(C_n,x)=n\,x^{n-2}+n\,x^{n-1}+x^n=x^{n-2}(x^2+nx+n).
\end{equation}
The discriminant of the quadratic factor is $n^2-4n\ge0$ for $n\ge4$, and its
two roots are $z_n^\pm$ (for $n=4$ the discriminant vanishes and
$z_4^+=z_4^-=-2$). For $z_n^+$, rationalising gives
\begin{equation}
z_n^++1=\frac{\sqrt{n^2-4n}-(n-2)}{2}
=\frac{-2}{\sqrt{n^2-4n}+n-2}<0,
\end{equation}
\begin{equation}
|z_n^++1|=\frac{2}{\sqrt{n^2-4n}+n-2}\approx\frac1n\to0,
\end{equation}
so that $z_n^+$ tends to $-1$ from the left, with $z_n^+\in(-2,-1)$ for
$n\ge5$. For $z_n^-$, the expansion $\sqrt{n^2-4n}=n-2+o(1)$ gives
$z_n^-=-n+1+o(1)\to-\infty$, and for each $n\ge4$ there is exactly one such root.
\end{proof}

Denote by
$\mathcal{R}_c=\{z\in\C:\text{there is a connected graph }G\text{ with }
D_c(G,z)=0\}$ the set of all connected domination roots.

\begin{lemma}[Backward closure]\label{lem:backward}
If $z_1\in\mathcal{R}_c$, then for every positive integer $m$, all solutions
of the equation $(w+1)^m-1=z_1$ belong to $\mathcal{R}_c$ (they are distinct
unless $z_1=-1$, in which case $w=-1$ is the unique solution).
\end{lemma}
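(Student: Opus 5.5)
The plan is to apply Theorem~\ref{thm:cd} directly to a witness graph for $z_1$. Since $z_1\in\mathcal{R}_c$, fix a connected graph $G$ with $D_c(G,z_1)=0$. For a positive integer $m$, consider the lexicographic product $G[K_m]$, which we first check is connected. Take any two vertices $(u,i)$ and $(v,j)$. If $u=v$, they lie in the clique $K_m^u$ and are adjacent or equal. If $u\neq v$, choose a path $u=u_0u_1\cdots u_\ell=v$ in $G$; then Fact~B shows that $(u_0,i)(u_1,1)\cdots(u_{\ell-1},1)(u_\ell,j)$ is a path in $G[K_m]$. So $G[K_m]$ is connected and its connected domination polynomial is defined.

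By Theorem~\ref{thm:cd} applied with $n=m$,
\[
D_c(G[K_m],w)=D_c\big(G,(w+1)^m-1\big).
\]
Hence any $w$ with $(w+1)^m-1=z_1$ satisfies $D_c(G[K_m],w)=D_c(G,z_1)=0$, so $w\in\mathcal{R}_c$. We should also make sure $D_c(G[K_m],\cdot)$ is not the zero polynomial, so that ``root'' is meaningful. This holds because $V(G[K_m])$ itself is a connected dominating set, which gives the coefficient $d_c(G[K_m],nm)\ge1$ of the top-degree term, where $n=|V(G)|$.

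For the parenthetical claim, the solutions are $w=-1+\zeta$, where $\zeta$ ranges over the $m$-th roots of $1+z_1$. If $z_1\neq-1$, then $1+z_1\neq0$, and a nonzero complex number has exactly $m$ distinct $m$-th roots, so the solutions are distinct. If $z_1=-1$, the equation becomes $(w+1)^m=0$, whose only solution is $w=-1$.

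Nothing here is a real obstacle. The only point needing care is connectivity of $G[K_m]$, which is required because $D_c$ was defined only for connected graphs, and the path construction above settles it.
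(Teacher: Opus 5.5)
Your proposal is correct and follows essentially the same route as the paper: take a connected witness graph $G$, use Theorem~\ref{thm:cd} to get $D_c(G[K_m],w)=D_c(G,z_1)=0$, and show $G[K_m]$ is connected by lifting a path of $G$. You also check two points the paper leaves implicit, namely that $D_c(G[K_m],x)$ is not the zero polynomial and that the solutions $w=-1+\zeta$ with $\zeta^m=1+z_1$ are distinct when $z_1\neq-1$.
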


\begin{proof}
Suppose $D_c(G,z_1)=0$ with $G$ connected. By Theorem~\ref{thm:cd},
$D_c(G[K_m],x)=D_c(G,(x+1)^m-1)$. Evaluating the polynomial $D_c(G,y)$ at
$y=z_1$, the equation $(w+1)^m-1=z_1$ says that $x=w$ is a root of
$D_c(G,(x+1)^m-1)=0$, hence $D_c(G[K_m],w)=0$. Moreover, $G[K_m]$ is
connected whenever $G$ is: given any two vertices of $G[K_m]$, connectedness
of $G$ provides, for their first coordinates $u,v$, a path
$u=v_0v_1\cdots v_\ell=v$ in $G$, and
\[
(u,1)=(v_0,1)\sim(v_1,1)\sim\cdots\sim(v_\ell,1)=(v,1)
\]
is a walk in $G[K_m]$ joining the two fibres, each step using
$(v_i,1)(v_{i+1},1)\in E(G[K_m])$ (since $v_i\sim v_{i+1}$); within each
fibre the clique edges complete the connection. Hence $G[K_m]$ is connected
and $w\in\mathcal{R}_c$.\qedhere
\end{proof}

\subsection{Right-hand seeds: joins of cycles}

\begin{definition}[Join]\label{def:join}
The \emph{join} $G_1\vee G_2$ of two graphs $G_1$ and $G_2$ is obtained from
their disjoint union by joining every vertex of $V(G_1)$ to every vertex of
$V(G_2)$.
\end{definition}

The following formula for the join is due to Dhananjaya Murthy et
al.~\cite[Theorem~2.7]{murthy}. The seed construction of
Lemma~\ref{lem:joinseed} builds directly on it.

\begin{lemma}[Join formula, Murthy et al.~{\cite{murthy}}]\label{lem:join}
Let $G_i$ be a graph of order $n_i$ ($i=1,2$). Then
\begin{equation}
D_c(G_1\vee G_2,x)=D_c(G_1,x)+D_c(G_2,x)
+\big((x+1)^{n_1}-1\big)\big((x+1)^{n_2}-1\big).
\end{equation}
\end{lemma}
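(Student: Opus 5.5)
We classify each nonempty vertex subset $S$ of $G_1\vee G_2$ by which sides it meets, and show that each class contributes exactly one term of the formula. Write $V_i=V(G_i)$. Every nonempty $S\subseteq V_1\cup V_2$ falls into exactly one of three disjoint cases: (a) $\emptyset\ne S\subseteq V_1$; (b) $\emptyset\ne S\subseteq V_2$; (c) $S\cap V_1\ne\emptyset$ and $S\cap V_2\ne\emptyset$. The empty set is never a connected dominating set, so $D_c(G_1\vee G_2,x)$ is the sum of the generating functions of the connected dominating sets in the three cases.

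\emph{Case (c).} Here every such $S$ is a connected dominating set, with no further condition.
\begin{enumerate}
\item[(1)] Domination: every vertex of $V_1$ is adjacent to each vertex of $S\cap V_2\ne\emptyset$, and symmetrically every vertex of $V_2$ is adjacent to each vertex of $S\cap V_1$.
\item[(2)] Connectivity: the join edges give a complete bipartite graph between $S\cap V_1$ and $S\cap V_2$. Both parts are nonempty, so this spanning subgraph of $(G_1\vee G_2)[S]$ is connected.
\end{enumerate}
The sets in case (c) correspond to pairs of independently chosen nonempty subsets of $V_1$ and of $V_2$. Their generating function is therefore $\big((x+1)^{n_1}-1\big)\big((x+1)^{n_2}-1\big)$.

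\emph{Case (a).} We claim that $S$ is a connected dominating set of $G_1\vee G_2$ if and only if $S$ is a connected dominating set of $G_1$.
\begin{enumerate}
\item[(1)] The induced subgraph $(G_1\vee G_2)[S]$ equals $G_1[S]$, since no join edges lie inside $V_1$. So the two connectivity conditions agree.
\item[(2)] Every vertex of $V_2$ is dominated automatically, because $S\ne\emptyset$ and each vertex of $S$ is adjacent to all of $V_2$.
\item[(3)] A vertex of $V_1\setminus S$ can only be dominated through $G_1$-edges. So $S$ dominates $V_1$ in the join exactly when it dominates $G_1$.
\end{enumerate}
Hence case (a) contributes $D_c(G_1,x)$, and by symmetry case (b) contributes $D_c(G_2,x)$. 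Adding the three contributions gives the formula.

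The only delicate point is the interpretation of $D_c(G_i,x)$ when $G_i$ is disconnected, for instance $G_i=\overline{K_n}$. Then $D_c(G_i,x)$ must be read as $\sum_S x^{|S|}$ over sets $S$ with $G_i[S]$ connected and $S$ dominating $G_i$. This sum is $0$ when $G_i$ is disconnected, which agrees with case (a), since then no $S\subseteq V_i$ dominates $V_i$ while inducing a connected subgraph. With this convention stated explicitly, the argument above is complete. No genuine obstacle remains beyond checking carefully that the three cases are disjoint and exhaustive.
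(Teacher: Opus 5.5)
Your proof is correct and complete. The paper gives no proof of this lemma (it is quoted from \cite{murthy}), and your partition of the nonempty sets $S$ by which of $V_1,V_2$ they meet is the standard argument, the same one that gives the join formula for the ordinary domination polynomial.

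Your closing remark is also well taken. The paper defines $D_c$ only for connected graphs, yet it states the lemma for arbitrary $G_i$. The convention $D_c(G_i,x)=0$ for disconnected $G_i$ is exactly what is needed: for instance, it recovers $D_c(P_3,x)=x(1+x)^2$ from $P_3=K_1\vee\overline{K_2}$. In the paper's only application, to $C_m\vee C_n$, both factors are connected and the issue does not arise.
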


\begin{lemma}[Right-hand seeds]\label{lem:joinseed}
For every $\delta\in(0,1)$ there exist sufficiently large odd integers $m,n$
such that $D_c(C_m\vee C_n,x)$ has a real root in $(-1,-1+\delta)$: real
connected domination roots approach $-1$ arbitrarily closely from
the right.
\end{lemma}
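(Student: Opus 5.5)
The plan is to apply the intermediate value theorem to $f(x):=D_c(C_m\vee C_n,x)$ on the interval $[-1,-1+t]$, where $t:=\delta/2$ is fixed. We take $m$ and $n$ odd, both at least $5$ (one may even take $m=n$), and show that $f(-1)<0$ while $f(-1+t)>0$ once $m,n$ are large. Since $f$ is a polynomial, this gives a real root in $(-1,-1+t)\subset(-1,-1+\delta)$, and it is a connected domination root because $C_m\vee C_n$ is connected.

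\emph{Step 1 (the explicit formula).} Combining Lemma~\ref{lem:join} with Lemma~\ref{lem:cycle} gives
\[
f(x)=x^{m-2}\big(x^2+mx+m\big)+x^{n-2}\big(x^2+nx+n\big)+\big((x+1)^m-1\big)\big((x+1)^n-1\big).
\]

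\emph{Step 2 (the value at $-1$).} At $x=-1$ the quadratic factor equals $1-m+m=1$, so $D_c(C_m,-1)=(-1)^{m-2}=-1$ because $m$ is odd. Likewise $D_c(C_n,-1)=-1$. The product term equals $(0-1)(0-1)=1$. Hence $f(-1)=-1-1+1=-1<0$. This is exactly where the oddness of $m$ and $n$ is used.

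\emph{Step 3 (the value at $-1+t$).} Put $x=-1+t$ with $0<t<1/2$. Then $x^2+mx+m=x^2+mt$, which lies between $0$ and $1+m$. Also $|x|^{m-2}=(1-t)^{m-2}$. Therefore
\[
|D_c(C_m,x)|\le (1-t)^{m-2}(m+1),
\]
and this tends to $0$ as $m\to\infty$, because geometric decay beats linear growth. The same bound holds for $n$. The product term equals $(t^m-1)(t^n-1)=(1-t^m)(1-t^n)$, which tends to $1$. So for all sufficiently large odd $m$ and $n$ we get $f(-1+t)>1/2>0$.

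\emph{Step 4 (conclusion).} By the intermediate value theorem, $f$ has a root in $(-1,-1+t)$.

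The only delicate point is Step~3: one must check that the cycle contributions, although they carry the linear factor $mt$, are killed by the factor $(1-t)^{m-2}$ for a fixed $t$. This is elementary once $t$ is fixed before letting $m,n\to\infty$. If a quantitative choice of $m,n$ is wanted, it suffices to take $m=n$ large enough that $2(1-t)^{m-2}(m+1)<(1-t^m)^2-1/2$.
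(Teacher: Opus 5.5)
Your proposal is correct and follows essentially the same route as the paper. Both arguments compute $D_c(C_m\vee C_n,-1)=-1$ for odd $m,n$ from the join and cycle formulas. Both then show positivity at the right endpoint, because each cycle term is a linear factor times $(1-t)^{k-2}$ while the product term $(1-t^m)(1-t^n)$ tends to $1$, and conclude by the intermediate value theorem. The only difference is cosmetic: you work on $(-1,-1+\delta/2)$ rather than directly on $(-1,-1+\delta)$, which is unnecessary but harmless.
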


\begin{proof}
Fix $\delta\in(0,1)$; in what follows $m,n$ are odd. By Lemma~\ref{lem:cycle},
\begin{equation}
D_c(C_k,-1)=(-1)^{k-2}\big(1-k+k\big)=(-1)^{k-2},
\end{equation}
so $D_c(C_k,-1)=-1$ for odd $k$. Since $(1+x)^k\big|_{x=-1}=0$,
Lemma~\ref{lem:join} gives
\begin{equation}
D_c(C_m\vee C_n,-1)=-1+(-1)+(0-1)(0-1)=-1<0.
\end{equation}
On the other hand, at $x=-1+\delta$,
\begin{equation}
D_c(C_k,-1+\delta)=(-1+\delta)^{k-2}\big(1+(k-2)\delta+\delta^2\big),
\end{equation}
whose modulus is at most $(1-\delta)^{k-2}(1+k\delta+\delta^2)\to0$ as
$k\to\infty$; and
\begin{equation}
\big((1+x)^m-1\big)\big((1+x)^n-1\big)\Big|_{x=-1+\delta}
=(\delta^m-1)(\delta^n-1)\to1\qquad(m,n\to\infty).
\end{equation}
Hence $D_c(C_m\vee C_n,-1+\delta)>0$ for sufficiently large odd $m,n$, and the
intermediate value theorem yields a real root of $D_c(C_m\vee C_n,x)$ in
$(-1,-1+\delta)$.
\end{proof}

\begin{example}[Numerical verification]\label{ex:joinseed}
$D_c(C_5\vee C_5,x)$ has a root $\approx-0.587$ in $(-1,0)$;
$D_c(C_5\vee C_{19},x)$ has a root $\approx-0.818$; and
$D_c(C_{19}\vee C_{19},x)$ has a root $\approx-0.908$, approaching $-1$ as
$m,n$ grow. These and further values are displayed in
Figure~\ref{fig:seeds}.
\end{example}

\begin{figure}[htbp]
\centering
\begin{tikzpicture}
\begin{axis}[
    width=0.92\textwidth, height=4.6cm,
    xmin=-11.6, xmax=0.4, ymin=-0.45, ymax=1.55,
    axis lines=left,
    xlabel={$x$},
    ytick=\empty,
    xtick={-11,-10,-9,-8,-7,-6,-5,-4,-3,-2,-1,0},
    x tick label style={font=\small},
    xlabel style={font=\small},
    legend style={font=\small,draw=none,at={(0.015,0.97)},anchor=north west},
]
\addplot[only marks,mark=triangle*,mark size=2.2pt,color=red!70!black] coordinates
{(-3.618034,0) (-4.732051,0) (-5.791288,0) (-6.828427,0) (-7.854102,0)
(-8.872983,0) (-9.887482,0) (-10.898979,0)};
\addlegendentry{$z_n^-$ from $C_n$, $5\le n\le12$}
\addplot[only marks,mark=square*,mark size=1.7pt,color=blue!75!black] coordinates
{(-1.381966,0.5) (-1.267949,0.5) (-1.208712,0.5) (-1.171573,0.5)
(-1.145898,0.5) (-1.127017,0.5) (-1.112518,0.5) (-1.101021,0.5)};
\addlegendentry{$z_n^+$ from $C_n$, $5\le n\le12$}
\addplot[only marks,mark=*,mark size=1.7pt,color=teal] coordinates
{(-0.587440,1.0) (-0.714073,1.0) (-0.795980,1.0) (-0.817949,1.0)
(-0.865988,1.0) (-0.908222,1.0) (-0.937868,1.0) (-0.956195,1.0)};
\addlegendentry{join roots of $C_m\vee C_n$}
\draw[dashed,gray] (axis cs:-1,-0.45) -- (axis cs:-1,1.55);
\end{axis}
\end{tikzpicture}
\caption{The two seed families on the real line (the three horizontal lanes
are for display only). Triangles: the cycle roots $z_n^-=-n+1+o(1)$,
diverging to $-\infty$; squares: the cycle roots $z_n^+\to-1^{-}$; circles:
real roots of the joins $C_m\vee C_n$ for the odd pairs
$(m,n)=(5,5),(5,9),(9,9),(5,19),(9,19),(19,19),(19,39),(39,39)$, approaching
$-1$ from the right (the first, fourth and sixth values are those of
Example~\ref{ex:joinseed}).}
\label{fig:seeds}
\end{figure}

\subsection{Main theorem}

\begin{theorem}\label{thm:real}
The closure of the real connected domination roots is $(-\infty,0]$.
\end{theorem}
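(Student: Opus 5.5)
The plan is to prove the two inclusions separately. For the inclusion $\overline{\mathcal{R}_c\cap\R}\subseteq(-\infty,0]$: every $D_c(G,x)$ has nonnegative coefficients and is not identically zero (the set $V(G)$ is itself a connected dominating set), so $D_c(G,x)>0$ for $x>0$. Hence every real connected domination root lies in $(-\infty,0]$, and that set is closed. The point $0$ is itself a root, by Lemma~\ref{lem:cycle}. So it remains to show that every $a<0$ with $a\neq-1$ is a limit of real roots; the point $-1$ then follows by closedness, and anyway $z_n^+\to-1$.

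The mechanism is Lemma~\ref{lem:backward} restricted to real solutions. If $y\in\mathcal{R}_c$ is real, consider $w=-1+\operatorname{sgn}(y+1)\,|y+1|^{1/m}$. When $y+1>0$ this is a real solution of $(w+1)^m-1=y$ for every $m\ge1$. When $y+1<0$ it is a real solution provided $m$ is odd. In both cases $w$ is a real connected domination root. The strategy is to use a single seed $y$ with $|y+1|$ either very small or very large, and to tune the exponent $m$ so that $|y+1|^{1/m}$ hits a prescribed value approximately. I split the target $a=-1+\sigma t$, with $\sigma=\pm1$ and $t>0$, into three cases.

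\emph{Case $a\in(-1,0)$, so $\sigma=+1$ and $t\in(0,1)$.} By Lemma~\ref{lem:joinseed} there are real roots $y_k$ with $y_k+1=\delta_k\in(0,1)$ and $\delta_k\to0$. Set $m_k=\lceil \ln\delta_k/\ln t\rceil$, which tends to $\infty$. Then $\ln(\delta_k^{1/m_k})=\ln\delta_k/m_k$ lies between $\ln t\cdot\frac{m_k-1}{m_k}$ and $\ln t$, so $\delta_k^{1/m_k}\to t$. Hence the roots $-1+\delta_k^{1/m_k}$ converge to $a$.

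\emph{Case $a\in(-2,-1)$, so $\sigma=-1$ and $t\in(0,1)$.} Use the seeds $y=z_n^+$, which satisfy $y+1=-\varepsilon_n$ with $\varepsilon_n\approx 1/n\to0$ by Lemma~\ref{lem:cycle}. Choose $m_n$ odd, adjacent to $\ln\varepsilon_n/\ln t$. The parity adjustment changes $m_n$ by at most $1$, which is negligible as $m_n\to\infty$. The same computation gives $-1-\varepsilon_n^{1/m_n}\to a$.

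\emph{Case $a<-2$, so $t>1$.} Use $y=z_n^-$, where $|y+1|=n-2+o(1)\to\infty$. Take $m_n$ odd with $m_n\approx\ln|y+1|/\ln t$, so that $|y+1|^{1/m_n}\to t$ and the roots $-1-|y+1|^{1/m_n}\to a$.

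The point $a=-2$ is covered by $z_4^{\pm}=-2$, or by closedness. The step needing most care is the exponent-tuning estimate, namely that rounding $\ln|y+1|/\ln t$ to an (odd) integer still gives convergence. This holds because the relative rounding error is $O(1/m)$ and $m\to\infty$ exactly because $|\ln|y+1||\to\infty$. It is precisely here that seeds with $|y+1|\to0$ from the right, supplied by Lemma~\ref{lem:joinseed}, are needed for the interval $(-1,0)$.
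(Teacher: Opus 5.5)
Your proof is correct. It uses the same decomposition into $(-1,0)$, $(-2,-1)$ and $(-\infty,-2)$ as the paper. It also uses the same seeds (the join roots, $z_n^+$ and $z_n^-$) and the same real-root form of Lemma~\ref{lem:backward}. Where you differ is in how the approximation is arranged in the two cycle cases.

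The paper fixes the window $(z-\varepsilon,z+\varepsilon)$ and chooses the odd exponent $m$ first. It then searches the cycle family for a seed lying inside $f_m\big((z-\varepsilon,z+\varepsilon)\big)$. This requires quantitative information on how the seeds are spaced: the bounds $\frac{1}{2n}<\delta_n<\frac{2}{n}$ in Case~1, and the unit spacing $z_n^-=-n+1+o(1)$ in Case~3.

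You reverse the order of choice: seed first, then the exponent obtained by rounding $\ln|y+1|/\ln t$ to an (odd) integer. The only property of a seed sequence you need is $\bigl|\ln|y_k+1|\bigr|\to\infty$ on the correct side of $-1$. This is in substance what the paper does in its Case~2, which takes one join seed with $\rho+1$ tiny, runs over all exponents $r$, and uses the gap bound $g'(\xi)\le 4e^{-2}/\eta$. Your rounding estimate plays the role of that mean-value gap bound.

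What your version gains is a single uniform argument for all three intervals that does not depend on how the seeds are distributed. What the paper's Cases~1 and~3 gain in exchange is an explicit pair $(m,n)$ for a prescribed $\varepsilon$, at the cost of the extra asymptotics of the cycle roots.
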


\begin{proof}
First, all coefficients of $D_c(G,x)$ are nonnegative, so there is no positive
real root, and $0$ is always a root. It remains to prove density. Let
$z\in(-\infty,0)$ and $\varepsilon>0$ be given (taken small enough that the
intervals involved avoid the endpoints); we construct a real connected domination
root in $(z-\varepsilon,z+\varepsilon)$. Write $f_m(w)=(w+1)^m-1$, with $m$
odd, so that $f_m:\R\to\R$ is a strictly increasing bijection.

\emph{Case 1: $z\in(-2,-1)$.} Take
$0<\varepsilon<\min\{|z+1|,\,1-|z+1|\}$ and put
\[
a=|z+1|+\varepsilon,\qquad b=|z+1|-\varepsilon.
\]
Then $0<b<a<1$. Since $m$ is odd,
\[
f_m\big((z-\varepsilon,z+\varepsilon)\big)=\big(-1-a^m,\,-1-b^m\big).
\]
On the other hand, by Lemma~\ref{lem:cycle},
\[
\delta_n:=|-1-z_n^+|=\frac{2}{\sqrt{n^2-4n}+n-2},
\]
and for $n\ge5$ we have
\[
\frac{1}{2n}<\delta_n<\frac{2}{n}.
\]
Now
\[
\frac{1/(2b^m)}{2/a^m}=\frac14\Big(\frac{a}{b}\Big)^m\longrightarrow\infty,
\]
so for all sufficiently large odd $m$ there exists an integer $n\ge5$ with
\[
\frac{2}{a^m}<n<\frac{1}{2b^m}.
\]
Consequently
\[
b^m<\frac{1}{2n}<\delta_n<\frac{2}{n}<a^m,
\]
so that
\[
z_n^+=-1-\delta_n\in(-1-a^m,-1-b^m)
=f_m\big((z-\varepsilon,z+\varepsilon)\big).
\]
By Lemma~\ref{lem:backward}, the real solution $w=f_m^{-1}(z_n^+)$ of the
equation $f_m(w)=z_n^+$ is a connected domination root; and since $f_m$ is
strictly increasing on $\R$, we have $w\in(z-\varepsilon,z+\varepsilon)$.

\emph{Case 2: $z\in(-1,0)$.} The left-hand seeds $z_n^+$ always lie to the left
of $-1$ and cannot reach an interval on the right, so we use the right-hand
seeds provided by Lemma~\ref{lem:joinseed} instead. Since $z+1\in(0,1)$, set
\[
\delta=\min\big\{z+1,\ e^{-4e^{-2}/\varepsilon},\ \tfrac12\big\}\in(0,1).
\]
By Lemma~\ref{lem:joinseed}, applied with this $\delta$, there exist
sufficiently large odd integers $m',n'$ and a real connected domination root
$\rho\in(-1,-1+\delta)\subseteq(-1,z)$ of $D_c(C_{m'}\vee C_{n'},x)$; then
$\rho+1<\delta\le e^{-4e^{-2}/\varepsilon}$, so writing $\rho+1=e^{-\eta}$
with $\eta=-\ln(\rho+1)>0$ we have $\eta>4e^{-2}/\varepsilon$.

Say $D_c(H,\rho)=0$, where $H$ is the join $C_{m'}\vee C_{n'}$ constructed in
Lemma~\ref{lem:joinseed}. For each positive integer $r$, let
\[
w_r=(\rho+1)^{1/r}-1.
\]
Then $(w_r+1)^r-1=(\rho+1)-1=\rho$, so by Lemma~\ref{lem:backward}, $w_r$ is a
real connected domination root of $H[K_r]$ (here $r$ is an arbitrary positive
integer, not necessarily odd: when $r$ is even, $(w_r+1)^r$ is a positive
number in $(0,1)$ for $w_r\in(-1,0)$, and since $\rho\in(-1,0)$ we have
$\rho+1\in(0,1)$, so every positive integer $r$ works).

The sequence $\{w_r\}$ has the following properties:
\begin{itemize}
\item $w_r\in(-1,0)$ for every positive integer $r$: indeed
$w_r+1=(\rho+1)^{1/r}\in(0,1)$;
\item $w_r$ is strictly increasing: $(\rho+1)^{1/r}$ is increasing in $r$
(for $\rho+1<1$, the smaller $1/r$ is, the larger $(\rho+1)^{1/r}$ is, so
larger $r$ gives larger $w_r$);
\item $w_1=\rho<z$ (since $\rho\in(-1,z)$);
\item $\lim_{r\to\infty}w_r=0>z$ (since $(\rho+1)^{1/r}\to1$ for
$\rho+1\in(0,1)$);
\item the consecutive gaps $w_{r+1}-w_r$ are uniformly smaller than
$\varepsilon$: by the mean value theorem applied to the function
$g(t)=(\rho+1)^{1/t}-1=e^{-\eta/t}-1$ (with $\eta=-\ln(\rho+1)$), whose
derivative is $g'(t)=\eta t^{-2}e^{-\eta/t}$, there exists $\xi\in(r,r+1)$
such that $g(r+1)-g(r)=g'(\xi)=\eta\xi^{-2}e^{-\eta/\xi}$.
\end{itemize}

We estimate $g'(\xi)$ further. Since $\xi\in(r,r+1)$, putting
$u=\eta/\xi\in(\eta/(r+1),\eta/r)$ we get
\[
g'(\xi)=\frac{\eta}{\xi^2}e^{-\eta/\xi}
=\frac{1}{\eta}\left(\frac{\eta}{\xi}\right)^2e^{-\eta/\xi}
=\frac{1}{\eta}u^2e^{-u}.
\]
The function $h(u)=u^2e^{-u}$ on $u>0$ attains its maximum where
$h'(u)=2ue^{-u}-u^2e^{-u}=ue^{-u}(2-u)=0$, namely at $u=2$; and
$h(2)=4e^{-2}$. Hence $h(u)\le4e^{-2}$ for all $u>0$, and therefore
\[
g'(\xi)=\frac{1}{\eta}h(u)\le\frac{4e^{-2}}{\eta}<\varepsilon
\]
(the last step using the choice $\eta>4e^{-2}/\varepsilon$). Thus
$|w_{r+1}-w_r|=|g(r+1)-g(r)|<\varepsilon$ for all $r$.

The sequence $\{w_r\}$ increases strictly from $w_1=\rho<z$ to
$\lim_{r\to\infty}w_r=0>z$, passing from below $z$ to above $z$ while
consecutive terms differ by less than $\varepsilon$; hence some $w_r$ lies in
$(z-\varepsilon,z+\varepsilon)$.

\emph{Case 3: $z\in(-\infty,-2)$.} Take
$0<\varepsilon<-z-2$ and put
\[
c=|z+1|>1.
\]
Then for odd $m$,
\[
f_m\big((z-\varepsilon,z+\varepsilon)\big)
=I_m:=\big(-(c+\varepsilon)^m-1,\,-(c-\varepsilon)^m-1\big).
\]
Write
\[
A_m=(c+\varepsilon)^m,\qquad B_m=(c-\varepsilon)^m.
\]
Since $c-\varepsilon>1$, we have
\[
A_m-B_m\longrightarrow\infty.
\]
By Lemma~\ref{lem:cycle} we have $z_n^-=-n+1+o(1)$; choose $N_0$ such that
\[
|z_n^- - (-n+1)|<\tfrac14
\]
for all $n\ge N_0$. Next choose an odd integer $m$ large enough that
\[
A_m-B_m>3\quad\text{and}\quad B_m+\tfrac52\ge N_0
\]
(the latter condition being ensured by $B_m=(c-\varepsilon)^m\to\infty$). Then
the interval
\[
\left(B_m+\frac52,\ A_m+\frac32\right)
\]
has length greater than $1$, so it contains an integer $n\ge N_0$. For this $n$,
\[
B_m+\frac52<n<A_m+\frac32.
\]
Hence
\[
-A_m-1+\frac12<-n+1<-B_m-1-\frac12.
\]
Combining this with $|z_n^- - (-n+1)|<\tfrac14$ we obtain
\[
-A_m-1<z_n^-<-B_m-1,
\]
that is, $z_n^-\in I_m=f_m\big((z-\varepsilon,z+\varepsilon)\big)$. By
Lemma~\ref{lem:backward}, the real solution $w$ of the equation $f_m(w)=z_n^-$
is a connected domination root; and since $f_m$ is strictly increasing on $\R$,
we have $w\in(z-\varepsilon,z+\varepsilon)$.

\emph{Endpoints.} The numbers $z=-2$, $z=-1$ and $z=0$ are themselves connected
domination roots ($D_c(K_2,x)=x(x+2)$, $D_c(P_3,x)=x(1+x)^2$, and $0$ is
always a root), so they belong to the closure.

Combining these cases, $(-\infty,0]\subseteq\overline{\mathcal{R}_c\cap\R}$,
while the reverse inclusion follows from the nonnegativity of the coefficients.
\end{proof}

\section{The closure of connected domination roots is the whole complex plane}\label{sec:complex}

Below we prove that the connected domination roots are dense in the whole complex
plane; recall that $\mathcal{R}_c$ denotes the set of all connected domination
roots.

\begin{lemma}\label{lem:circle}
$\mathcal{R}_c$ is dense on the circle $\{z:|z+1|=1\}$.
\end{lemma}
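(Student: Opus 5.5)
Apply the backward closure (Lemma~\ref{lem:backward}) to a single seed root on the circle, namely $z_1=0$. Since $0$ is a root of every $D_c(G,x)$ (e.g.\ $D_c(K_2,x)=x(x+2)$), $0\in\mathcal{R}_c$. For every positive integer $m$, Lemma~\ref{lem:backward} then puts every solution of $(w+1)^m-1=0$ in $\mathcal{R}_c$. These solutions are $w=\zeta-1$ with $\zeta$ running over the $m$-th roots of unity. Concretely, they are the roots of $D_c(K_2[K_m],x)=D_c(K_2,(x+1)^m-1)$, which has the factor $(x+1)^m-1$. Every such $w$ satisfies $|w+1|=|\zeta|=1$, so it lies on the circle.

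For density, take a point $-1+e^{i\theta}$ on the circle and $\varepsilon>0$. Pick $m$ with $2\pi/m<\varepsilon$; then some $m$-th root of unity $\zeta=e^{2\pi i k/m}$ satisfies $|e^{i\theta}-\zeta|\le 2\pi/m<\varepsilon$. So $w=\zeta-1\in\mathcal{R}_c$ is within $\varepsilon$ of the given point. Since the union over $m$ of all roots of unity is dense on the unit circle, translating by $-1$ shows $\mathcal{R}_c$ is dense on $\{z:|z+1|=1\}$.

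I do not expect a real obstacle. The only point to check is that Lemma~\ref{lem:backward} applies to $z_1=0$, i.e.\ that there is a connected graph with connected domination root $0$. $K_2$ serves, and $K_2[K_m]=K_{2m}$ is connected. As a cross-check, $D_c(K_{2m},x)=(x+1)^{2m}-1$ directly, and its roots $\zeta-1$, with $\zeta$ a $2m$-th root of unity, already lie on the circle.

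If one later needs a denser seed structure (for the full complex plane result), a richer starting set can be used, such as the seeds $z_n^\pm$ of Lemma~\ref{lem:cycle} or the join roots of Lemma~\ref{lem:joinseed}. For the present statement, the seed $0$ suffices.
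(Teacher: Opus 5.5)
Your proof is correct and follows essentially the same route as the paper. The paper also uses the root $0$ of every $D_c(G,x)$, pushes it through $D_c(G[K_n],x)=D_c(G,(x+1)^n-1)$ to obtain all points $-1+\zeta$ with $\zeta^n=1$, and concludes from the density of roots of unity. Your cross-check via $D_c(K_N,x)=(1+x)^N-1$ additionally shows the lemma can be obtained directly from complete graphs, without the substitution formula.
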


\begin{proof}
Let $G$ be any connected graph. Every connected domination polynomial has zero
constant term (the empty set is not a dominating set), so $0$ is a root of
$D_c(G,y)$. By Theorem~\ref{thm:cd},
$D_c(G[K_n],x)=D_c(G,(x+1)^n-1)$. Since $D_c(G,0)=0$, all $n$ solutions of
the equation $(x+1)^n-1=0$ are roots of $D_c(G[K_n],x)$, namely
\[
x=-1+e^{2\pi i j/n},\quad j=0,1,\dots,n-1.
\]
These points lie on the circle $|x+1|=1$ (since $|e^{2\pi i j/n}|=1$).

By the density of the roots of unity: the points $-1+e^{2\pi i j/n}$,
$j=0,1,\dots,n-1$, are equally spaced on the circle $|z+1|=1$ with angular
step $2\pi/n$; as $n\to\infty$ they become dense on this circle (that is,
for every point $p$ on the circle and every $\delta>0$, there exist $n$ and
$j$ with $|-1+e^{2\pi i j/n}-p|<\delta$). Hence $\mathcal{R}_c$ is dense on
the circle $|z+1|=1$.\qedhere
\end{proof}

\begin{theorem}[Main theorem]\label{thm:complex}
The closure of the connected domination roots is the whole complex plane:
$\overline{\mathcal{R}_c}=\C$.
\end{theorem}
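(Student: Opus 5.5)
The plan is to spread roots off the circle $|z+1|=1$ using Lemma~\ref{lem:backward} together with Theorem~\ref{thm:real}. Fix $z\in\C$ and $\varepsilon>0$, and write $z+1=re^{i\theta}$. The case $r=0$ is immediate, since $-1\in\mathcal{R}_c$. The case $r=1$ is handled by Lemma~\ref{lem:circle}. That leaves $r\neq0,1$.

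The key observation is that the backward map sends a seed $\rho$ to the $m$ points
\[
w=-1+(\rho+1)^{1/m}e^{2\pi i j/m},\qquad j=0,\dots,m-1,
\]
all of which lie in $\mathcal{R}_c$. These points sit on the circle centred at $-1$ of radius $|\rho+1|^{1/m}$, with angular spacing $2\pi/m$. It therefore suffices to produce, for arbitrarily large $m$, a seed $\rho\in\mathcal{R}_c$ with $|\rho+1|^{1/m}$ within $\varepsilon/2$ of $r$. Once $m$ is large enough that $2\pi m^{-1}(r+1)<\varepsilon/2$, some $j$ gives an argument within $\pi/m$ of $\theta$, and that $w$ lies within $\varepsilon$ of $z$.

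The seeds will be real roots. We want
\[
|\rho+1|\in\bigl((r-\varepsilon/2)^m,\,(r+\varepsilon/2)^m\bigr),
\]
shrinking $\varepsilon$ so that $r-\varepsilon/2>0$. By Theorem~\ref{thm:real}, real connected domination roots are dense in $(-\infty,0]$. Hence $\rho+1$ ranges densely over $(-\infty,1]$, and $|\rho+1|$ is dense in $[0,\infty)$. This covers both $r<1$ and $r>1$: an open nonempty interval of moduli always contains some $|\rho+1|$ with $\rho$ a real root. When $\rho+1<0$, the $m$th roots of $\rho+1$ are $|\rho+1|^{1/m}e^{i(\pi+2\pi j)/m}$, and these are again equally spaced with step $2\pi/m$, so the same angular argument applies. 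To keep matters clean, I will simply choose $\rho\in(-1,0)$ when $r<1$ and $\rho<-1$ or $\rho\in(-1,0)$ appropriately otherwise. Note that when $r>1$ we need $|\rho+1|>1$, which forces $\rho<-2$; this is available from Case~3 of Theorem~\ref{thm:real}.

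So the ordered steps are: (1) reduce to $r\notin\{0,1\}$; (2) choose $m$ large for the angular resolution; (3) use Theorem~\ref{thm:real} to pick a real root $\rho$ with $|\rho+1|^{1/m}\in(r-\varepsilon/2,r+\varepsilon/2)$; (4) apply Lemma~\ref{lem:backward} and select the appropriate $j$. We then conclude that $\overline{\mathcal{R}_c}\supseteq\C$.

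The main obstacle is the modulus–angle bookkeeping: the radius window $((r-\varepsilon/2)^m,(r+\varepsilon/2)^m)$ must be nonempty and must contain some $|\rho+1|$. Nonemptiness is automatic, and density of the real roots from Theorem~\ref{thm:real} supplies the point. The only subtle point is making sure $\rho\neq-1$, which is harmless since the window avoids modulus $0$.
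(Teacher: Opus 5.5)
Your argument is correct. It rests on the same two ingredients as the paper's proof: real seeds from Theorem~\ref{thm:real}, pulled back through $f_m(w)=(w+1)^m-1$ via Lemma~\ref{lem:backward}, with $m$ large for angular resolution. Where you differ is in how you handle the perturbation step.

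The paper fixes an odd $m$ and picks the exact target $w^*=-1+re^{i\theta_j}$, for which $f_m(w^*)=-r^m-1$ is real. It takes a real root $\rho$ within $\eta$ of $-r^m-1$. It then uses the holomorphic inverse function theorem, plus a bound $C$ on $|(f_m^{-1})'|$ over a small disc, to get $|f_m^{-1}(\rho)-w^*|<C\eta$.

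You instead write the preimages of $\rho$ explicitly as $-1+|\rho+1|^{1/m}e^{i\phi}$, with $\phi$ running over $m$ equally spaced angles, and you control radius and angle separately. The radial tolerance pulls back exactly to the window $\bigl((r-\varepsilon/2)^m,(r+\varepsilon/2)^m\bigr)$ for $|\rho+1|$, and the angular error is at most $\pi/m$. Hence $|w-z|\le\bigl|\,|\rho+1|^{1/m}-r\,\bigr|+r\pi/m$.

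This removes the appeal to the inverse function theorem and the non-explicit constant $C$, and it makes the parity of $m$ and the sign of $\rho+1$ irrelevant. The local inverse the paper invokes is precisely your branch $-1+|\rho+1|^{1/m}e^{i\theta_j}$, so your proof is the paper's argument made explicit. The inverse-function formulation would only matter for substitution maps without closed-form inverse branches.

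Two small remarks. First, the separate case $r=1$ is unnecessary, since the window argument already covers it. Second, as in the paper, seeds $\rho<-1$ suffice: as $\rho$ ranges over $(-\infty,-1)$, $|\rho+1|$ sweeps all of $(0,\infty)$. So you do not need the join seeds of Lemma~\ref{lem:joinseed} for this theorem.
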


\begin{proof}
Let $w_0\in\C$ and $\varepsilon>0$ be given. It suffices to produce
$w\in\mathcal{R}_c$ with $|w-w_0|<\varepsilon$.

\emph{Case 1: $w_0=-1$.} Note that $-1\in\mathcal{R}_c$ itself, since
$D_c(P_3,x)=x(1+x)^2$ has $-1$ as a root (of multiplicity $2$). Then
$|w-w_0|=0<\varepsilon$, and the conclusion is trivial.

\emph{Case 2: $w_0\neq-1$.} Write $r=|w_0+1|>0$ and
$\theta=\arg(w_0+1)\in[0,2\pi)$. Choose an odd integer $m$ large enough that
some branch angle $\theta_j=\frac{(2j+1)\pi}{m}$
($j\in\{0,\dots,m-1\}$) satisfies
\[
|\theta_j-\theta|<\varepsilon/(2r).
\]
Such a $j$ exists: for odd $m$ large enough, the angles
$\theta_j=\pi/m,3\pi/m,\dots,(2m-1)\pi/m$ are equally spaced around the
circle with step $2\pi/m$, so there is $j$ with
$|\theta_j-\theta|\le\pi/m<\varepsilon/(2r)$ (taking $m$ large enough; here
$|\theta_j-\theta|$ denotes the circular distance, the difference being
taken modulo $2\pi$).

Let $w^*=-1+r\,e^{i\theta_j}$. Then
\begin{align}
|w^*-w_0|&=|-1+r\,e^{i\theta_j}-(-1+r\,e^{i\theta})|
=r\,|e^{i\theta_j}-e^{i\theta}|\\
&\le r\cdot|\theta_j-\theta|<\varepsilon/2
\end{align}
(using $|e^{i\alpha}-e^{i\beta}|\le|\alpha-\beta|$ for all real
$\alpha,\beta$, since
$|e^{i\alpha}-e^{i\beta}|=2|\sin\frac{\alpha-\beta}{2}|\le|\alpha-\beta|$).

Next we compute $f_m(w^*)=(w^*+1)^m-1$:
\[
f_m(w^*)=(w^*+1)^m-1=(r\,e^{i\theta_j})^m-1=r^m e^{im\theta_j}-1.
\]
Since $\theta_j=(2j+1)\pi/m$, we have $m\theta_j=(2j+1)\pi$, so
$e^{im\theta_j}=e^{i(2j+1)\pi}=-1$, and therefore
\[
f_m(w^*)=r^m\cdot(-1)-1=-r^m-1\in\R,\quad -r^m-1<-1\ (\text{since }r>0).
\]
We will fix a small tolerance $\eta>0$ below, after the constant $C$ has been
determined; for any such $\eta$, Theorem~\ref{thm:real} (density of the real
connected domination roots in $(-\infty,0]$) provides a real root
$\rho\in\mathcal{R}_c\cap\R$ with
\[
|\rho-(-r^m-1)|<\eta.
\]

We now apply the inverse function theorem. The derivative of
$f_m(w)=(w+1)^m-1$ at $w^*$ is
\[
f_m'(w^*)=m(w^*+1)^{m-1}=m\,(r\,e^{i\theta_j})^{m-1}
=m\,r^{m-1}e^{i(m-1)\theta_j}.
\]
We have $f_m'(w^*)\neq0$ (since $r>0$ and $|e^{i(m-1)\theta_j}|=1$). By the
local inverse function theorem of complex analysis, $f_m$ is locally
biholomorphic near $w=w^*$: there exist an open neighbourhood $U\ni w^*$ and an
open neighbourhood $V\ni f_m(w^*)=-r^m-1$ such that $f_m:U\to V$ is
biholomorphic and $f_m^{-1}:V\to U$ is holomorphic.

Choose $\eta_0>0$ such that the closed disc
$\overline{B}(-r^m-1,\eta_0)$ is contained in $V$ (possible since $V$ is
open); with $m$ fixed (it was determined by the angular condition above), put
\[
C=\max_{|\zeta-(-r^m-1)|\le\eta_0}\big|(f_m^{-1})'(\zeta)\big|<\infty,
\]
which is finite because $(f_m^{-1})'$ is continuous on the compact disc.
Since the disc is convex, the mean value estimate applied to $f_m^{-1}$
along the segment from $-r^m-1$ to $\rho$ gives
\[
|f_m^{-1}(\rho)-f_m^{-1}(-r^m-1)|\le C\,|\rho-(-r^m-1)|
\qquad\text{for every }\rho\in B(-r^m-1,\eta_0).
\]
Now choose $0<\eta\le\eta_0$ with $C\eta<\varepsilon/2$ (with $C$ already
fixed, this is achieved by taking $\eta$ small); this determines the
tolerance used for $\rho$ above. Since $f_m^{-1}(-r^m-1)=w^*$ and
$|\rho-(-r^m-1)|<\eta$, we have $w=f_m^{-1}(\rho)\in U$ and
\[
|w-w^*|=|f_m^{-1}(\rho)-w^*|\le C\,|\rho-(-r^m-1)|<C\eta<\varepsilon/2.
\]
Therefore we obtain
\[
|w-w_0|\le|w-w^*|+|w^*-w_0|<\varepsilon/2+\varepsilon/2=\varepsilon.
\]
By Lemma~\ref{lem:backward}, $w\in\mathcal{R}_c$ ($w$ is a connected
domination root of $H[K_m]$, where $H$ is a graph corresponding to
$\rho\in\mathcal{R}_c\cap\R$).

Therefore, for every $w_0\in\C$ and $\varepsilon>0$ there exists
$w\in\mathcal{R}_c$ with $|w-w_0|<\varepsilon$, that is,
$\C\subseteq\overline{\mathcal{R}_c}$; the reverse inclusion is trivial, so
$\overline{\mathcal{R}_c}=\C$.\qedhere
\end{proof}

\begin{remark}
The proof follows the same framework that Brown and Tufts~\cite{bt} and Brown
and Beaton~\cite{bb} used for ordinary domination roots: the backward closure
under substitution (Lemma~\ref{lem:backward}) combined with a dense seed set (the
circle of Lemma~\ref{lem:circle} and the real axis of Theorem~\ref{thm:real}).
The substitution formula of Theorem~\ref{thm:cd} is what carries the argument
from $D$ to $D_c$.
\end{remark}

One further consequence, already visible in the cycle seeds,
delimits what kind of root bound one can hope for and is worth recording
explicitly.

\begin{corollary}\label{cor:unbounded}
The set $\mathcal{R}_c$ of connected domination roots is unbounded in $\C$:
for every $M>0$ there exist a connected graph $G$ and a root $z$ of
$D_c(G,x)$ with $|z|>M$. In particular, no graph-independent bound on $|z+1|$
can hold for connected domination roots; any valid bound must grow with the
order of the graph, as does Oboudi's bound
$|z+1|\le\sqrt[\delta+1]{2^n-1}$ for ordinary domination roots~\cite{oboudi}.
\end{corollary}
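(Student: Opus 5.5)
The quickest route is to use the cycle seeds of Lemma~\ref{lem:cycle} directly, with no appeal to the density theorems. For $n\ge4$ the cycle $C_n$ is connected, and $D_c(C_n,x)=x^{n-2}(x^2+nx+n)$ has the real root
\[
z_n^-=\frac{-n-\sqrt{n^2-4n}}{2}.
\]
Since $\sqrt{n^2-4n}\ge0$, we get $|z_n^-|\ge n/2$. So, given $M>0$, I would take any integer $n>\max\{2M,4\}$ and set $G=C_n$ and $z=z_n^-$. Then $|z|\ge n/2>M$, which proves that $\mathcal{R}_c$ is unbounded. Lemma~\ref{lem:cycle} also gives the sharper asymptotic $z_n^-=-n+1+o(1)$, but the crude bound $n/2$ is enough and avoids any asymptotic bookkeeping.

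An alternative, more abstract argument is also available. Theorem~\ref{thm:complex} says $\overline{\mathcal{R}_c}=\C$, and a bounded set cannot have closure equal to $\C$, so unboundedness follows at once. I would present the cycle argument as the main proof because it is explicit: it exhibits a concrete graph for each $M$ and shows that the order of that graph grows linearly with $M$.

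For the ``in particular'' part, suppose some constant $K$ satisfied $|z+1|\le K$ for every connected domination root $z$. Then every root would satisfy $|z|\le K+1$, contradicting the unboundedness just proved. Moreover, the roots $z_n^-$ satisfy $|z_n^-+1|\ge n/2-1$ while coming from graphs of order $n$. Hence any valid bound of the form $|z+1|\le\Phi(G)$ must satisfy $\Phi(C_n)\ge n/2-1$, so it must grow at least linearly in the order along the cycles, which matches the qualitative behaviour of Oboudi's bound.

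I do not expect a real obstacle here. The only point needing care is the phrase ``must grow with the order.'' I would make it precise exactly as above: a graph-independent bound is impossible, and along the family $C_n$ any bound is at least $n/2-1$. I would not attempt to prove an actual Oboudi-type upper bound for $D_c$, since the corollary does not claim one.
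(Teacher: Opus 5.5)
Your proof is correct and follows the paper's argument. The paper also uses the cycle root $z_n^-=(-n-\sqrt{n^2-4n})/2$ from Lemma~\ref{lem:cycle}, notes that $z_n^-\to-\infty$ (so $|z_n^-+1|\to\infty$), and remarks that unboundedness follows alternatively from Theorem~\ref{thm:complex}; your explicit estimate $|z_n^-|\ge n/2$ is a slightly more quantitative version of its asymptotic $z_n^-=-n+1+O(1/n)$.
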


\begin{proof}
By Lemma~\ref{lem:cycle}, $D_c(C_n,x)=x^{n-2}(x^2+nx+n)$, whose negative root
$z_n^-=\bigl(-n-\sqrt{n^2-4n}\bigr)/2=-n+1+O(1/n)$ tends to $-\infty$ as
$n\to\infty$. Hence $\mathcal{R}_c$ is unbounded in $\C$, and moreover
$|z_n^-+1|\to\infty$. (Unboundedness also follows from
Theorem~\ref{thm:complex}, since a bounded set has bounded closure.)
\end{proof}

\section{Conclusions and further research}\label{sec:concl}

\subsection{Conclusions}

We have determined the distribution of the roots of the connected domination
polynomial. The substitution formula
$D_c(G[K_n],x)=D_c(G,(x+1)^n-1)$ (Theorem~\ref{thm:cd}) reduces the problem to
the construction of seed roots: the closed form for the cycles provides roots
approaching $-1$ from the left and roots diverging to $-\infty$
(Lemma~\ref{lem:cycle}), while the joins $C_m\vee C_n$ provide roots approaching
$-1$ from the right (Lemma~\ref{lem:joinseed}). The backward closure under
substitution then yields the two main results: the closure of the real connected
domination roots is $(-\infty,0]$ (Theorem~\ref{thm:real}), and the closure of
all connected domination roots is the whole complex plane
(Theorem~\ref{thm:complex}). These are, respectively, the connected domination
analogues of the density theorems of Brown--Tufts~\cite{bt} and
Brown--Beaton~\cite{bb} for ordinary domination polynomials. For the independent and
total domination polynomials the substitution behaviour is known~\cite{jahari,dod},
but it provides no spreading mechanism, and the distribution of their roots
remains to be studied (see the open problems below).

\subsection{Directions for further research}

\paragraph{Open Problem 1 (Closure of independent domination roots).} Jahari and
Alikhani proved that $D_i(G[K_n],x)=D_i(G,nx)$~\cite{jahari}, a substitution of
scaling type: under substitution the roots are merely rescaled by $1/n$, so the
spreading mechanism of $(x+1)^n-1$ is not available. What is the closure of the
independent domination roots? Jahari and Alikhani have constructed families of
graphs all of whose roots are real~\cite{jahari}; whether the closure of the real
roots is $(-\infty,0]$ remains unknown.

\paragraph{Open Problem 2 (Closure of total domination roots).} Dod~\cite{dod}
determined the total domination polynomial of the lexicographic product
$G[K_n]$, but the closure of the total domination roots is not clear. Jafari and
Alikhani obtained structural results on the location of total
domination roots~\cite{jafari,jafari2}, but no density statement is known. Since the Brown--Tufts form
fails for total domination (see the discussion in Section~\ref{sec:sub}), the
backward closure mechanism cannot be used; in particular, it is not known whether
the total domination roots are dense in the whole complex plane.

\paragraph{Open Problem 3 (Other domination variants).} The substitution
behaviour of the polynomials attached to paired domination, Roman domination
and Italian (Roman $\{2\}$-) domination is unexplored. In particular, the
interaction between the counting polynomial of Italian domination functions
$f:V\to\{0,1,2\}$ and graph substitution (the lexicographic product) has not
been studied; for the corresponding domination parameters on lexicographic
products we refer to Cabrera Mart\'inez, Estrada-Moreno and
Rodr\'iguez-Vel\'azquez~\cite{cm21}, and to Cabrera Mart\'inez, Garc\'ia-G\'omez
and Rodr\'iguez-Vel\'azquez~\cite{cm22}.

\paragraph{Open Problem 4 (Analytic properties of connected domination roots).}
Oboudi obtained the bound $|z+1|\le\sqrt[\delta+1]{2^n-1}$ for domination
roots~\cite{oboudi}. By Corollary~\ref{cor:unbounded}, no graph-independent
bound can hold for connected domination roots, but a bound depending on the
order (and possibly the minimum degree) of the graph might exist: is there an
explicit bound for connected domination roots analogous to Oboudi's? (For
ordinary domination roots, Omar recently obtained a bound on the modulus that
is linear in the maximum degree~\cite{omar}.) The
unimodality and log-concavity of connected domination polynomials are likewise
open (compare Beaton and Brown~\cite{bb2}), as is the question whether genuine
limit curves, in the sense of the $K_{n,n}$ analysis of~\cite{bt}, can arise
from natural families of connected domination roots; the seed families
constructed here have accumulation points only on the real axis.

\section*{Declarations}

\noindent\textbf{Funding.} The authors declare that no funds, grants, or other
support were received during the preparation of this manuscript.

\medskip
\noindent\textbf{Competing interests.} The authors have no relevant financial
or non-financial interests to disclose.

\medskip
\noindent\textbf{Data availability.} No datasets were generated or analysed
during the current study. All numerical verifications reported here were carried
out by direct enumeration of the relevant subsets and can be reproduced from the
formulas given in the paper.

\medskip
\noindent\textbf{Author contributions.} Pingping Shao: Conceptualization,
Methodology, Formal analysis, Writing~--~original draft. Chengye Zhao:
Methodology, Supervision, Validation, Writing~--~review \& editing. Both authors
approved the final version.

\medskip
\noindent\textbf{Acknowledgements.} During the preparation of this work the
authors used AI-assisted tools, including Doubao, for two purposes: to improve
the readability and language of the manuscript, and to carry out routine
symbolic and numerical checks of the computations reported. The authors
independently verified all such checks, reviewed and edited the content as
needed, and take full responsibility for the content of the publication. No AI
tool is listed as an author.



\begin{thebibliography}{29}

\bibitem{hhs}
Haynes, T.W., Hedetniemi, S.T., Slater, P.J.: Fundamentals of Domination in
Graphs. Pure and Applied Mathematics (New York), Marcel Dekker, New York (1998)

\bibitem{al}
Arocha, J.L., Llano, B.: Mean value for the matching and dominating polynomial.
Discuss. Math. Graph Theory \textbf{20}(1), 57--69 (2000).
\\url{https://doi.org/10.7151/dmgt.1106}

\bibitem{ap}
Alikhani, S., Peng, Y.-H.: Introduction to domination polynomial of a graph.
Ars Combin. \textbf{114}, 257--266 (2014)

\bibitem{aap}
Akbari, S., Alikhani, S., Peng, Y.-H.: Characterization of graphs using
domination polynomials. European J. Combin. \textbf{31}(7), 1714--1724 (2010).
\\url{https://doi.org/10.1016/j.ejc.2010.03.007}

\bibitem{kotek}
Kotek, T., Preen, J., Simon, F., Tittmann, P., Trinks, M.: Recurrence relations
and splitting formulas for the domination polynomial. Electron. J. Combin.
\textbf{19}(3), P47 (2012). \\url{https://doi.org/10.37236/2475}

\bibitem{sokal}
Sokal, A.D.: Chromatic roots are dense in the whole complex plane. Combin.
Probab. Comput. \textbf{13}(2), 221--261 (2004).
\\url{https://doi.org/10.1017/S0963548303006023}

\bibitem{bhn}
Brown, J.I., Hickman, C.A., Nowakowski, R.J.: On the location of roots of
independence polynomials. J. Algebraic Combin. \textbf{19}(3), 273--282 (2004).
\\url{https://doi.org/10.1023/B:JACO.0000030703.39946.70}

\bibitem{bt}
Brown, J.I., Tufts, J.: On the roots of domination polynomials. Graphs Combin.
\textbf{30}(3), 527--547 (2014). \\url{https://doi.org/10.1007/s00373-013-1306-z}

\bibitem{alikhani}
Alikhani, S.: Some new results on domination roots of a graph. Electron. Notes
Discrete Math. \textbf{43}, 425--430 (2013).
\\url{https://doi.org/10.1016/j.endm.2013.07.062}

\bibitem{isrn2013}
Alikhani, S.: On the domination polynomial of some graph operations. ISRN
Combin. \textbf{2013}, Article ID 146595 (2013).
\\url{https://doi.org/10.1155/2013/146595}

\bibitem{oboudi}
Oboudi, M.R.: On the roots of domination polynomial of graphs. Discrete Appl.
Math. \textbf{205}, 126--131 (2016).
\\url{https://doi.org/10.1016/j.dam.2015.12.010}

\bibitem{omar}
Omar, M.: New perspectives on the unimodality of domination polynomials.
Preprint (2026). \\url{https://arxiv.org/abs/2601.14494}

\bibitem{bb}
Brown, J.I., Beaton, I.: On the real roots of domination polynomials. Contrib.
Discrete Math. \textbf{16}(3), 175--182 (2021).
\\url{https://doi.org/10.55016/ojs/cdm.v16i3.72075}

\bibitem{bb2}
Beaton, I., Brown, J.I.: On the unimodality of domination polynomials. Graphs
Combin. \textbf{38}(3), 90 (2022). \\url{https://doi.org/10.1007/s00373-022-02487-x}

\bibitem{ao}
Akbari, S., Oboudi, M.R.: Cycles are determined by their domination
polynomials. Ars Combin. \textbf{116}, 353--358 (2014)

\bibitem{sw}
Sampathkumar, E., Walikar, H.B.: The connected domination number of a graph.
J. Math. Phys. Sci. \textbf{13}(6), 607--613 (1979)

\bibitem{murthy}
Dhananjaya Murthy, B.V., Deepak, G., Soner, N.D.: Connected domination
polynomial of a graph. Int. J. Math. Archive \textbf{4}(11), 90--96 (2013)

\bibitem{mojdeh}
Mojdeh, D.A., Emadi, A.S.: Connected domination polynomial of graphs. Fasc.
Math. \textbf{60}(1), 103--121 (2018).
\\url{https://doi.org/10.1515/fascmath-2018-0007}

\bibitem{yoosuf}
Yoosuf, R., Kuttipulackal, P.: The connected domination polynomial of some
graph constructions. J. Phys. Conf. Ser. \textbf{1850}, 012043 (2021).
\\url{https://doi.org/10.1088/1742-6596/1850/1/012043}

\bibitem{ccc}
Caputi, L., Celoria, D., Collari, C.: Categorifying connected domination via
graph \"uberhomology. J. Pure Appl. Algebra \textbf{227}(9), 107381 (2023).
\\url{https://doi.org/10.1016/j.jpaa.2023.107381}

\bibitem{cdh}
Cockayne, E.J., Dawes, R.M., Hedetniemi, S.T.: Total domination in graphs.
Networks \textbf{10}(3), 211--219 (1980). \\url{https://doi.org/10.1002/net.3230100304}

\bibitem{allan}
Allan, R.B., Laskar, R.: On domination and independent domination numbers of a
graph. Discrete Math. \textbf{23}(2), 73--76 (1978).
\\url{https://doi.org/10.1016/0012-365X(78)90105-X}

\bibitem{jafari}
Jafari, N., Alikhani, S.: On the roots of total domination polynomial of
graphs. J. Discrete Math. Sci. Cryptogr. \textbf{23}(4), 795--807 (2020).
\\url{https://doi.org/10.1080/09720529.2019.1616908}

\bibitem{jahari}
Jahari, S., Alikhani, S.: On the independent domination polynomial of a graph.
Discrete Appl. Math. \textbf{289}, 416--426 (2021).
\\url{https://doi.org/10.1016/j.dam.2020.10.019}

\bibitem{dod}
Dod, M.: Graph products of the trivariate total domination polynomial and
related polynomials. Discrete Appl. Math. \textbf{209}, 92--101 (2016).
\\url{https://doi.org/10.1016/j.dam.2015.10.008}

\bibitem{armada}
Armada, C.L., Canoy, S.R., Jr., Go, C.E.: Forcing subsets for
$\gamma_c$-sets and $\gamma_t$-sets in the lexicographic product of graphs.
Eur. J. Pure Appl. Math. \textbf{12}(4), 1779--1786 (2019).
\\url{https://doi.org/10.29020/nybg.ejpam.v12i4.3485}

\bibitem{jafari2}
Alikhani, S., Jafari, N.: On the roots of total domination polynomial of
graphs, II. Preprint (2019). \\url{https://arxiv.org/abs/1910.05776}

\bibitem{cm21}
Cabrera Mart\'inez, A., Estrada-Moreno, A., Rodr\'iguez-Vel\'azquez, J.A.: From
(secure) $w$-domination in graphs to protection of lexicographic product graphs.
Bull. Malays. Math. Sci. Soc. \textbf{44}(6), 3747--3765 (2021).
\\url{https://doi.org/10.1007/s40840-021-01141-8}

\bibitem{cm22}
Cabrera Mart\'inez, A., Garc\'ia-G\'omez, C., Rodr\'iguez-Vel\'azquez, J.A.:
Perfect domination, Roman domination and perfect Roman domination in
lexicographic product graphs. Fund. Inform. \textbf{185}(3), 201--220 (2022).
\\url{https://doi.org/10.3233/FI-222108}

\end{thebibliography}
\end{document}